\newcommand{\dd}[1]{\,\mathrm{d}{#1}}						% math mode
\newcommand{\N}{\mathbb{N}}											% math mode
\newcommand{\Z}{\mathbb{Z}}											% math mode
\newcommand{\Q}{\mathbb{Q}}											% math mode
\newcommand{\R}{\mathbb{R}}											% math mode
\newcommand{\norm}[1]{\lVert #1 \rVert}					% math mode
\newcommand{\lrsptext}[1]{\quad\text{#1}\quad}	% math mode
\newcommand{\lsptext}[1]{\quad\text{#1}\ }			% math mode
\newcommand{\rsptext}[1]{\ \text{#1}\quad}			% math mode
\newcommand{\sptext}[1]{\ \text{#1}\ }					% math mode
\newcommand{\ES}{Erd\H{o}s-Sur\'anyi }
\newcommand{\RS}{Roth-Szekeres }
\DeclareMathOperator{\li}{li}
\newtheorem{theorem}{Theorem}
\newtheorem{lemma}{Lemma}
\newtheorem{proposition}{Proposition}
\newtheorem{corollary}{Corollary}
\author{Liam Baker and Stephan Wagner\footnote{This work was supported by the National Research Foundation of South Africa, grant 96236.} \\ [5mm]
\normalsize Department of Mathematical Sciences \\
\normalsize Stellenbosch University, Private Bag X1 \\
\normalsize Matieland 7602, South Africa}
\title{\ES sequences and trigonometric integrals}
\date{}
\begin{document}\maketitle

\begin{abstract}

We study representations of integers as sums of the form $\pm a_1\pm a_2\pm \dotsb \pm a_n$, where $a_1,a_2,\ldots$ is a prescribed sequence of integers. Such a sequence is called an \ES sequence if every integer can be written in this form for some $n\in\N$ and choices of signs in infinitely many ways. We study the number of representations of a fixed integer, which can be written as a trigonometric integral, and obtain an asymptotic formula under a rather general scheme due to Roth and Szekeres. Our approach, which is based on Laplace's method for approximating integrals, can also be easily extended to find higher-order expansions. As a corollary, we settle a conjecture of Andrica and Iona\c{s}cu on the number of solutions to the signum equation $\pm 1^k \pm 2^k \pm \dotsb \pm n^k = 0$.
\end{abstract}

\section{Introduction}

\subsection{\ES sequences and solutions to signum equations}

A sequence of positive integers $(a_n)_{n=1}^\infty$ is called an \ES sequence if every integer can be written in the form $\pm a_1\pm a_2\pm \dotsb \pm a_n$ for some $n\in\N$ and choices of signs $+$ and $-$, in infinitely many ways. Representations of this kind were first studied systematically by Erd\H{o}s and Sur\'anyi \cite{erdos1959uber}, who provided sufficient conditions for a sequence of integers to have this property (that cover e.g. the sequence of primes). 

\medskip

The sequence of $k$-th powers, which will be of particular interest to us, was shown to be an \ES sequence by Mitek \cite{mitek1980generalization} and later independently by Bleicher \cite{bleicher1996prielipp}, who also discusses the behaviour of the minimal choice of $n$. Drimbe \cite{drimbe1988generalization} showed that generally, any sequence $(p(n))_{n=1}^\infty$ where $p(n)\in\Q[n]$ takes an integer value whenever $n\in\Z$, and $\gcd\{p(n)\mid n\in\Z\} =1$, is an \ES-sequence, and this result was rediscovered more recently by Yu \cite{yu2002signed} and also generalised further by Boulanger and Chabert \cite{boulanger2004representation} (to the ring of algebraic integers over a cyclotomic field), by Chen and Chen \cite{chen2012weighted} (to weights other than $\pm 1$),  and again by Chen and Chen \cite{chen2013signed} (who provided a necessary and sufficient condition for arbitrary sequences of integers).

\medskip

For an \ES sequence $\mathbf{a} =(a_n)_{n=1}^\infty$, the signum equation of $\mathbf{a}$ is $\pm a_1\pm a_2\pm \dotsb \pm a_n =0$, and for a fixed $n\in\N$, a solution to the signum equation is a choice of $+$ and $-$ such that the equation holds. We denote the number of solutions to the signum equation of $\mathbf{a}$ by $S_{\mathbf{a}}(n)$, and more generally the number of representations of an integer $k$ as $\pm a_1\pm a_2\pm \dotsb\pm a_n$ by $S_\mathbf{a}(n,k)$. In \cite{AndricaTomescu02} it is shown that the number of solutions to the signum equation can be given by the following integral formula:
\begin{equation} \label{eq:signumInt} S_{\mathbf{a}}(n) =\frac{2^n}{2\pi} \int_0^{2\pi} \prod_{i=1}^n \cos(a_i t) \dd{t}, \end{equation}
which follows from expanding each cosine into a sum of exponentials, multiplying out and using the fact that for $m\in\Z$, $\int_0^{2\pi} \exp(i m t) \dd{t}$ equals $2\pi$ if $m=0$ and $0$ if $m\neq 0$. From this, it can be easily seen that the number of representations of $k$ as $\pm a_1\pm a_2\pm \dotsb\pm a_n$ is given by $S_\mathbf{a}(n,k) =S_{\mathbf{a}^\prime}(n+1)/2$ where $\mathbf{a}^\prime =(k,a_1,a_2,\dotsc)$, as was shown in \cite{AndricaVacaretu06}.

\medskip

Andrica and Tomescu \cite{AndricaTomescu02} conjectured that the number of solutions to the signum equation in the case $a_i = i$ is asymptotically equal to $\sqrt{6/\pi} \cdot n^{-3/2} 2^n$, which was recently proved by Sullivan \cite{Sullivan13}. The related question of representing numbers as sums of the form $\sum_{k=-n}^n \epsilon_k k$ with $\epsilon_k \in \{0,1\}$ (and determining the asymptotic number of representations) was also studied in several papers, see van Lint \cite{lint}, Entringer \cite{entringer}, Clark \cite{clark}, and Louchard and Prodinger \cite{LouchardProdinger2010}. Prodinger \cite{Prodinger1982} determined an asymptotic formula for the number of ways to partition the set $\{1,2,\ldots,n\}$ into two subsets of equal cardinality and sum (note that representations of zero of the form $0 = \pm 1\pm 2\pm \dotsb \pm n$ correspond exactly to partitions of this type, where however the cardinalities are not necessarily equal). The asymptotic behaviour of an integral similar to the one in \eqref{eq:signumInt} (but with sines rather than cosines) was studied recently in \cite{gaither2015}. 

\medskip

A more general conjecture in the case $a_n = n^k$ was recently formulated by Andrica and Iona\c scu \cite{AndricaIonascu14}: namely, that (for $n \equiv 0,3 \bmod 4$)
\begin{equation}\label{eq:andrica_ionascu}
S_{\mathbf{a}}(n)  \sim \sqrt{\frac{2(2k+1)}{\pi}} \cdot \frac{2^n}{n^{k+1/2}}.
\end{equation}
The main theorem of this paper establishes an asymptotic formula for sequences $(a_n)_{n=0}^\infty$ that belong to an analytic scheme due to Roth and Szekeres (see the following section). The conjecture of Andrica and Iona\c scu will be included as a special case. It will also follow that all these sequences are \ES sequences.

\subsection{\RS sequences}\label{sec:rs_seq}

In \cite{RothSzekeres54}, Roth and Szekeres investigated partitions into elements of a sequence $(a_n)_{n=0}^\infty$ satisfying the following conditions:

\begin{enumerate}

	\item[C1.] $\label{eq:c1} a_{n+1}\geq a_n$ for sufficiently large $n$;

	\item[C2.] $\label{eq:c2} s=\displaystyle\lim_{n\to\infty} \frac{\log a_n}{\log n}$ exists and is positive;

	\item[C3.] $\label{eq:c3} J_n =\displaystyle\inf_{(2a_n)^{-1}<t\leq 1/2} \frac{\sum_{i=1}^n \norm{a_i t}^2}{\log n} \to\infty \sptext{as} n\to\infty.$
%	\item[C4.] $\label{eq:c4} \gcd\{a_n\mid n\in\N\}=1$; and
\end{enumerate}

For brevity, we will call such sequences \RS sequences. Roth and Szekeres themselves showed that the following classes of sequences are \RS sequences:

\begin{enumerate} \label{enum:RS}

	\item $a_n =p_n$, the $n$th prime number;

	\item $a_n =f(n)$, where $f$ is a polynomial with rational coefficients taking integer values at integer places, such that $\gcd f(\Z) =\gcd\{f(n)\mid n\in\Z\} =1$ (for brevity, we will call such polynomials \emph{primitive});

	\item $a_n= f(p_n)$, $p_n$ is the $n$th prime number and $f$ is a polynomial with rational coefficients taking integer values at integer places, such that $\gcd\{n f(n)\mid n\in\Z\} =1$.

\end{enumerate}

In particular, we see that if $f$ is a primitive polynomial, then $(f(n))_{n=0}^\infty$ is both a \RS sequence and has been proved to be an \ES sequence. In fact, a corollary to the main theorem of this paper shows that all \RS sequences are indeed \ES sequences.

\section{Main Theorem and applications}

\subsection{Main Theorem and sequences of applicability}

For sequences that satisfy conditions C1--C3, we are able to provide an asymptotic formula for the integral in~\eqref{eq:signumInt}:

	\begin{theorem} \label{thm:main}

	Let $(a_n)_{n=1}^\infty$ be a \RS sequence. Then \begin{equation} \label{eq:int} \int_0^{\pi/2} \prod_{i=1}^n \cos(a_i t) \dd{t} =\frac{1}{2} \sqrt{\frac{2\pi}{\sum_{i=1}^n a_i^2}} -\frac{\sqrt{2\pi} \sum_{i=1}^n a_i^4}{8 \left( \sum_{i=1}^n a_i^2 \right)^{5/2}} +O\left(n^{-s-5/2+\epsilon} \right) \lsptext{for any} \epsilon>0. \end{equation}

	\end{theorem}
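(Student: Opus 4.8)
The plan is to evaluate the integral by Laplace's method, exploiting the fact that the integrand $\prod_{i=1}^n\cos(a_i t)$ concentrates sharply near $t=0$. Writing $A_k=\sum_{i=1}^n a_i^k$, I would split the range as $[0,\pi/2]=[0,T]\cup[T,\pi/2]$, where the cut-off $T$ is chosen so that $a_nT\to0$ (so that every argument $a_it$ with $i\le n$ stays small on the central piece) while $T\sqrt{A_2}\to\infty$ (so that the central piece already captures essentially all of the Gaussian mass). Using condition C2 in the form $a_n=n^{s+o(1)}$, which gives $A_k=n^{ks+1+o(1)}$ for each fixed $k$, a cut-off such as $T=A_2^{-1/2}n^{\eta}$ with a small $\eta>0$ (say $\eta<1/4$) meets both requirements, since then $a_nT=n^{-1/2+\eta+o(1)}\to0$ while $T\sqrt{A_2}=n^{\eta}\to\infty$.

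On the central interval all cosines are positive, so I would expand $\sum_{i=1}^n\log\cos(a_i t)=-\tfrac12 A_2 t^2-\tfrac1{12}A_4 t^4-\tfrac1{45}A_6 t^6-\cdots$, exponentiate, and expand the non-Gaussian factor to obtain
\[ \prod_{i=1}^n\cos(a_i t)=e^{-A_2 t^2/2}\Bigl(1-\tfrac1{12}A_4 t^4+R(t)\Bigr), \]
where the remainder satisfies $R(t)=O(A_6 t^6+A_4^2 t^8)$ uniformly on $[0,T]$ once $\eta$ is small enough that $A_4T^4$ and $A_6T^6$ tend to $0$. Extending each resulting integral from $[0,T]$ to $[0,\infty)$ (the discarded Gaussian tails being super-polynomially small by the choice of $T$) and inserting the standard moments $\int_0^\infty e^{-A_2 t^2/2}\dd{t}=\tfrac12\sqrt{2\pi/A_2}$ and $\int_0^\infty t^4 e^{-A_2 t^2/2}\dd{t}=\tfrac32\sqrt{2\pi}\,A_2^{-5/2}$ reproduces the first two terms of \eqref{eq:int} exactly. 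The error integrals $\int_0^\infty (A_6 t^6+A_4^2 t^8)e^{-A_2 t^2/2}\dd{t}$ are of order $A_6A_2^{-7/2}+A_4^2A_2^{-9/2}=n^{-s-5/2+o(1)}$, which is precisely where the stated remainder, and the loss of $\epsilon$, comes from.

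For the tail $[T,\pi/2]$ I would substitute $u=t/\pi$ and use the elementary bound $|\cos(\pi y)|=\cos(\pi\norm{y})\le\exp(-c\norm{y}^2)$ for a fixed $c>0$, giving $\prod_{i=1}^n|\cos(a_i t)|\le\exp\bigl(-c\sum_{i=1}^n\norm{a_i u}^2\bigr)$. This range splits once more: on $[T/\pi,(2a_n)^{-1}]$ condition C1 guarantees, for large $n$, that $a_n=\max_{i\le n}a_i$, so every $a_iu\le a_nu\le\tfrac12$, no reduction modulo $1$ occurs, and hence $\sum_{i=1}^n\norm{a_i u}^2=A_2u^2\ge A_2(T/\pi)^2\to\infty$ polynomially; on $((2a_n)^{-1},1/2]$ condition C3 yields $\sum_{i=1}^n\norm{a_i u}^2\ge J_n\log n$ with $J_n\to\infty$. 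In both sub-ranges the integrand is bounded by $\exp(-c\,\omega_n)$ with $\omega_n$ eventually exceeding $M'\log n$ for every $M'$, so the whole tail contributes $O(n^{-M})$ for every $M$ and is absorbed into the error term.

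The moment computation on the central interval is routine; the main obstacle is the tail, and in particular the mismatch between the two cut-offs. The Gaussian width is of order $A_2^{-1/2}\sim n^{-s-1/2}$, whereas condition C3 only becomes available beyond $u=(2a_n)^{-1}\sim n^{-s}$, a point already many Gaussian widths out. Bridging the intermediate zone $[A_2^{-1/2}n^{\eta}/\pi,(2a_n)^{-1}]$, where neither the Taylor expansion of $\log\cos$ is legitimate (some $a_it$ need not be small) nor C3 is yet in force, is the delicate step; it is handled by the observation that throughout this zone there is no wrap-around, so the crude bound $\cos(\pi a_i u)\le\exp(-c\,a_i^2u^2)$ already forces the required decay. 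Care must also be taken that the $o(1)$ exponents supplied by C2 are uniform across $i\le n$, which is ultimately what produces the $n^{\epsilon}$ factor in the remainder.
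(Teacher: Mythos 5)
Your proposal is correct and follows essentially the same route as the paper: a three-piece split of $[0,\pi/2]$ (central Gaussian zone, intermediate no-wrap-around zone handled via the bound $|\cos(\pi x)|\le\exp(-c\norm{x}^2)$ with $A_2u^2$ large, and the far zone handled by condition C3), followed by the Taylor expansion of $\log\cos$, extension of the moment integrals to $[0,\infty)$, and the error analysis $A_6A_2^{-7/2}+A_4^2A_2^{-9/2}=O(n^{-s-5/2+\epsilon})$ coming from C2. The only differences are cosmetic (your cut-off $T=A_2^{-1/2}n^{\eta}$ versus the paper's $b_n=n^{-s-1/3}$, and the order in which the two splitting points are introduced).
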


As will become clear from the proof, it would be possible to derive further terms of an asymptotic expansion.

	\begin{corollary} \label{col:main}

	If $\mathbf{a} =(a_n)_{n=0}^\infty$ is a \RS sequence, then $\mathbf{a}$ is also an \ES sequence.

	\end{corollary}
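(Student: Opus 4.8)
The plan is to derive the \ES property directly from the asymptotic formula. Fix an integer $k$; I want to produce infinitely many $n$ with $S_{\mathbf{a}}(n,k)\ge 1$, for then the representations of $k$, being of distinct lengths, are infinite in number, which is exactly what the definition demands. By the shift identity $S_{\mathbf{a}}(n,k)=S_{\mathbf{a}'}(n+1)/2$ with $\mathbf{a}'=(k,a_1,a_2,\dotsc)$ together with the integral formula~\eqref{eq:signumInt},
\begin{equation*}
S_{\mathbf{a}}(n,k)=\frac{2^n}{2\pi}\int_0^{2\pi}\cos(kt)\prod_{i=1}^n\cos(a_it)\dd{t},
\end{equation*}
so it suffices to show that this quantity is positive for infinitely many $n$ (and since the integrand depends only on $|k|$, I may assume $k\ge0$).

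First I would fold the integral onto the interval $[0,\pi/2]$ treated in Theorem~\ref{thm:main}. Setting $g(t)=\cos(kt)\prod_{i=1}^n\cos(a_it)$ and using $\cos(c(2\pi-t))=\cos(ct)$ and $\cos(c(\pi-t))=(-1)^c\cos(ct)$ for integer $c$, one obtains $g(2\pi-t)=g(t)$ and $g(\pi-t)=(-1)^{K}g(t)$ with $K=k+\sum_{i=1}^n a_i$. Hence the integral vanishes when $K$ is odd, and when $K$ is even it equals $4\int_0^{\pi/2}g(t)\dd{t}$, giving
\begin{equation*}
S_{\mathbf{a}}(n,k)=\frac{2^{n+1}}{\pi}\int_0^{\pi/2}\prod_{i=1}^{n+1}\cos(a_i't)\dd{t},
\end{equation*}
where $(a_i')$ lists $k,a_1,\dotsc,a_n$. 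Prepending the fixed term $k$ changes none of C1--C3, so $\mathbf{a}'$ is again a \RS sequence, and Theorem~\ref{thm:main} applies, showing that the right-hand integral equals $\tfrac12\sqrt{2\pi/(k^2+\sum_{i=1}^n a_i^2)}$ plus lower-order terms; its leading part is positive of order $n^{-s-1/2}$ and dominates the error. Therefore $S_{\mathbf{a}}(n,k)>0$ for all sufficiently large $n$ with $\sum_{i=1}^n a_i\equiv k\pmod 2$.

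The remaining point is to guarantee infinitely many $n$ of the correct parity, and I expect this parity bookkeeping to be the main obstacle, since it can force $S_{\mathbf{a}}(n,k)$ to vanish identically for half the values of $n$. The key observation is that C3 forces infinitely many $a_n$ to be odd: if $a_n$ were even for all $n>N$, then at $t=\tfrac12$ we would have $\norm{a_it}=\norm{a_i/2}\in\{0,\tfrac12\}$, so $\sum_{i=1}^n\norm{a_it}^2$ would stay bounded by a constant; as $a_n\to\infty$ (forced by C2 with $s>0$) the point $t=\tfrac12$ lies in the admissible range $((2a_n)^{-1},\tfrac12]$ for large $n$, whence $J_n\to0$, contradicting C3. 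Consequently $\sum_{i=1}^n a_i$ switches parity infinitely often and so realises the residue of $k$ modulo $2$ infinitely often. Intersecting these $n$ with the ``sufficiently large'' ones from the previous step yields infinitely many $n$ with $S_{\mathbf{a}}(n,k)>0$, which proves the corollary. Once the parity issue is resolved, positivity of the representation count is an immediate consequence of the sign of the leading term in Theorem~\ref{thm:main}.
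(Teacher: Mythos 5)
Your proof is correct and follows essentially the same route as the paper's: fold the integral formula for $S_\mathbf{a}(n,k)$ onto $[0,\pi/2]$ picking up the parity factor, apply Theorem~\ref{thm:main} to get a dominant positive leading term, and use Condition C3 at $t=1/2$ to force infinitely many odd $a_i$ and hence infinitely many $n$ of the required parity. The only cosmetic differences are that you settle for $S_\mathbf{a}(n,k)>0$ rather than noting it tends to infinity, and you are slightly more explicit than the paper in justifying that the prepended sequence $(k,a_1,a_2,\dotsc)$ is still Roth--Szekeres.
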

	\begin{proof}

	Let $k\in\Z$, and let $\mathbf{a}^\prime =(k,a_1,a_2,\dotsc)$. Then for $n\in\N$, the number of representations of $k$ as $\pm a_1\pm a_2\pm \dotsb\pm a_n$ is
\begin{align}
S_\mathbf{a}(n,k) &=\frac{2^n}{2\pi} \int_0^{2\pi} \cos(k t) \prod_{i=1}^n \cos(a_i t) \dd{t} =\frac{2^{n+1}}{2\pi} \int_0^\pi \cos(k t) \prod_{i=1}^n \cos(a_i t) \dd{t} \nonumber \\ 
	&=\frac{2^{n+1}}{2\pi} \int_0^{\pi/2} \cos(k t) \prod_{i=1}^n \cos(a_i t) +\cos(k(\pi-t)) \prod_{i=1}^n \cos(a_i(\pi-t)) \dd{t} \nonumber \\
&=\frac{2^{n+1}}{2\pi} \left(1+(-1)^{k+\sum_{i=1}^n a_i} \right) \int_0^{\pi/2} \cos(k t) \prod_{i=1}^n \cos(a_i t) \dd{t} \nonumber \\
&=\frac{2^n}{\sqrt{2\pi}} \left(1+(-1)^{k +\sum_{i=1}^n a_i}\right) \left[\frac{1}{\sqrt{\sum_{i=1}^n a_i^2}} -\frac{\sum_{i=1}^n a_i^4}{4\left(\sum_{i=1}^n a_i^2\right)^{5/2}} +O\left(n^{-s -5/2 +\epsilon}\right)\right] \lsptext{for all} \epsilon>0 \label{eq:genSignumInt} 
\end{align}
	by \autoref{thm:main}. Now since $\mathbf{a}$ is a \RS sequence, we know that $\sum_{i=1}^n \norm{a_i/2}^2/\log n \to\infty$ as $n\to\infty$ by Condition C3, and so in particular there are infinitely many $i\in\N$ such that $a_i$ is odd. Hence there are infinitely many $n\in\N$ such that $k+\sum_{i=1}^n a_i$ is even. For these $n$, $S_\mathbf{a}(n,k) \to\infty$ as $n\to\infty$, and hence $\mathbf{a}$ is an \ES sequence.
\end{proof}

	The following proposition further expands the applicability of the main theorem:

	\begin{proposition} \label{prop:expand} \
\begin{enumerate}

		\item If $S\subset\mathbb{N}$ has the property that $\#\{k\in S\mid k\leq n\} =O(\log n)$ as $n\to\infty$ and $\mathbf{a} =(a_n)_{n=1}^\infty$ is a \RS sequence, then the subsequence of $(a_n)_{n=1}^\infty$ consisting of all elements with indices \emph{not} in $S$ is also a \RS sequence for the same value of $s$.

		\item If $\mathbf{a} =(a_n)_{n=1}^\infty$ is a \RS sequence and is also a subsequence of a sequence $\mathbf{b} =(b_m)_{m=1}^\infty$ which satisfies conditions C1 and C2 (with a possibly different value of $s$), then $\mathbf{b} =(b_m)_{m=1}^\infty$ is also a \RS sequence (i.e. also satisfies condition C3).

 \end{enumerate}
\end{proposition}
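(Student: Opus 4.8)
The plan is to verify conditions C1--C3 for each part separately, exploiting that the sum appearing in C3 is a \emph{lower} bound which can only be helped by adding terms and hurt by removing them, while both C1 and C2 are insensitive to sparse perturbations of the index set.

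For the first part, write the retained subsequence as $a'_m = a_{n_m}$, where $n_1 < n_2 < \cdots$ enumerates $\N \setminus S$. Condition C1 is immediate, since any subsequence of an eventually nondecreasing sequence is eventually nondecreasing. For C2, the hypothesis $\#\{k \in S \mid k \le n\} = O(\log n)$ gives $n_m = m + O(\log n_m) = m + O(\log m)$, so $\log n_m / \log m \to 1$; writing $\frac{\log a'_m}{\log m} = \frac{\log a_{n_m}}{\log n_m} \cdot \frac{\log n_m}{\log m}$ then yields the same limit $s$. The one substantive point is C3. Here I would use that the infimum range for $\mathbf{a}'$ at index $m$, namely $(2a'_m)^{-1} < t \le 1/2$, coincides exactly with the range for $\mathbf{a}$ at index $n_m$, because $a'_m = a_{n_m}$. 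On this common range $\sum_{j=1}^m \norm{a'_j t}^2 = \sum_{i=1}^{n_m} \norm{a_i t}^2 - \sum_{i \le n_m,\, i \in S} \norm{a_i t}^2$, and the removed sum is at most $\tfrac14 \#\{k \in S \mid k \le n_m\} = O(\log n_m)$ since each $\norm{\cdot} \le \tfrac12$. Thus the removed part is negligible against the main term $J_{n_m} \log n_m$ coming from C3 for $\mathbf{a}$, and dividing by $\log m$ and using $\log n_m/\log m \to 1$ shows the infimum for $\mathbf{a}'$ still tends to infinity.

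For the second part, write $\mathbf a$ as $a_n = b_{m_n}$ and set $N = N(M) = \#\{n \mid m_n \le M\}$, so that $a_N \le b_M \le a_{N+1}$ for large $M$. The basic inequality is $\sum_{m=1}^M \norm{b_m t}^2 \ge \sum_{n=1}^N \norm{a_n t}^2$, as the left sum contains the right one. From $a_N \le b_M \le a_{N+1}$ and C2 for $\mathbf a$ one gets $b_M = N^{s+o(1)}$; combining this with C2 for $\mathbf b$ (with exponent $s'$) gives $\log N / \log M \to s'/s$, so in particular $N \to \infty$ and $\log N \asymp \log M$. I would then split the range $(2b_M)^{-1} < t \le 1/2$ at the point $(2a_N)^{-1}$. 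On the upper part $(2a_N)^{-1} < t \le 1/2$, condition C3 for $\mathbf a$ applies directly and gives $\sum_{n=1}^N \norm{a_n t}^2 \ge J_N \log N$, which after division by $\log M$ tends to infinity because $s'/s > 0$.

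The main obstacle is the lower part $(2b_M)^{-1} < t \le (2a_N)^{-1}$, where C3 for $\mathbf a$ at level $N$ is unavailable, and cannot be rescued at a higher level since the extra terms of $\mathbf a$ that would be needed there lie beyond position $M$ in $\mathbf b$. The observation that resolves this is that for such small $t$ one has $a_n t \le a_N t \le \tfrac12$ for every $n \le N$, so $\norm{a_n t} = a_n t$ and hence $\sum_{n=1}^N \norm{a_n t}^2 = t^2 \sum_{n=1}^N a_n^2$. Combining the lower bound $t > (2b_M)^{-1}$ with the estimates $\sum_{n=1}^N a_n^2 = N^{2s+1+o(1)}$ and $b_M^2 = N^{2s+o(1)}$ (both from C2) yields $\sum_{n=1}^N \norm{a_n t}^2 > \frac{\sum_{n=1}^N a_n^2}{4 b_M^2} = N^{1+o(1)}$, which dominates $\log M \asymp \log N$ by a polynomial factor uniformly on this range. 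Taking the infimum over the two ranges then completes the verification of C3 for $\mathbf b$.
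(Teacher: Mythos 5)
Your proof is correct, and part 1 follows the paper's argument essentially verbatim: the same estimate $n_m = m + O(\log m)$ for C2, and for C3 the same observation that the infimum range for the subsequence at index $m$ coincides with that of $\mathbf{a}$ at index $n_m$, so that the removed terms cost only $O(\log n_m)$. In part 2, however, you take a genuinely different route. The paper rounds the index \emph{up}: it takes $n(M)$ minimal with $m_{n(M)} \ge M$, so that $a_{n(M)} \ge b_M$ and the interval $(2a_{n(M)})^{-1} < t \le 1/2$ \emph{contains} the whole range $(2b_M)^{-1} < t \le 1/2$; after discarding the terms of $\mathbf{b}$ not belonging to $\mathbf{a}$ and paying an additive $-1$ for the term $k = n(M)$ (whose index $m_{n(M)}$ may exceed $M$), condition C3 for $\mathbf{a}$ at index $n(M)$ applies to the entire range in a single chain of inequalities, with no case distinction. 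You round \emph{down} ($N$ maximal with $m_N \le M$), which makes the containment of terms clean ($a_1,\dotsc,a_N$ all occur among $b_1,\dotsc,b_M$, no correction needed) but leaves the subinterval $(2b_M)^{-1} < t \le (2a_N)^{-1}$ outside the scope of C3 for $\mathbf{a}$; your fix --- on that subinterval $\norm{a_n t} = a_n t$ for all $n \le N$, so the sum equals $t^2 \sum_{n\le N} a_n^2 > \sum_{n \le N} a_n^2/(4 b_M^2) = N^{1+o(1)} \gg \log M$ --- is valid, and is essentially the mechanism behind Lemma~\ref{lem:s24} made explicit. The paper's variant is shorter; yours is more illuminating in that it exposes why the infimum in C3 need only be taken over $t > (2a_n)^{-1}$: below that threshold no wraparound of $\norm{\cdot}$ occurs and the bound is automatic. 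One small point common to both arguments (and glossed over by the paper as well): the inequalities $a_n \le a_N$ for $n \le N$ and $b_{m_N} \le b_M$ rely on eventual monotonicity (C1) combined with $a_N \to \infty$, which is legitimate for $M$ large.
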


	\begin{proof} \

\begin{enumerate}
\item Let $\mathbf{a}^\prime =(a_{n_m})_{m=1}^\infty$ denote the subsequence of $\mathbf{a}$ with indices not in $S$. It is obvious that $\mathbf{a}^\prime$ also satisfies condition C1. Moreover, for large $m$ we have $m =\#\{k\notin S\mid k\leq n_m\} =n_m+O(\log n_m)$, so $\lim_{m\to\infty} \log n_m/\log m =\lim_{m\to\infty} n_m/m =1$; thus
		\[s =\lim_{n\to\infty} \log a_n/\log n =\lim_{m\to\infty} \log a_{n_m}/\log n_m =\lim_{m\to\infty} \log a_{n_m}/\log m\]
		and so $\mathbf{a}^\prime$ also satisfies condition C2 with the same value of $s$. Finally,
\begin{align*}
\inf_{(2a_{n_m})^{-1}<t\leq 1/2} \frac{\sum_{i=1}^m \norm{a_{n_i} t}^2}{\log m} &\geq\inf_{(2a_{n_m})^{-1}<t\leq 1/2} \frac{\sum_{i=1}^{n_m} \norm{a_i t}^2\ -(n_m-m)}{\log m} \\
&=\inf_{(2a_{n_m})^{-1}<t\leq 1/2} \frac{\sum_{i=1}^{n_m} \norm{a_i t}^2+O(\log n_m)}{\log m} \\
&=\inf_{(2a_{n_m})^{-1}<t\leq 1/2} \frac{\sum_{i=1}^{n_m} \norm{a_i t}^2}{\log n_m} +O(1) \to\infty \quad\text{as}\ m\to\infty,
\end{align*} 
and so $\mathbf{a}^\prime$ also satisfies condition C3.

		\item Suppose that
		\[\lim_{n\to\infty} \frac{\log a_n}{\log n} =s_1 \quad \text{and} \quad \lim_{m\to\infty} \frac{\log b_m}{\log m} =s_2,\]
		and that $\mathbf{a} =(a_n)_{n=1}^\infty$ is the subsequence $(b_{m_n})_{n=1}^\infty$ of $\mathbf{b}$. It remains to show that $\mathbf{b}$ also satisfies Condition C3. Now for  all $M\in\mathbb{N}$ let $n(M)$ be the smallest $n\in\mathbb{N}$ such that $m_n\geq M$ (and thus $a_n = b_{m_n} \geq b_M > b_{m_{n-1}}$). Then
		\[\mspace{-18mu}\frac{s_2}{s_1} = \lim_{m\to\infty} \frac{\log b_m}{\log m} \bigg( \lim_{n\to\infty} \frac{\log a_n}{\log n} \bigg)^{-1} = \lim_{n\to\infty} \frac{\log b_{m_n}}{\log m_n}  \bigg( \lim_{n\to\infty} \frac{\log a_n}{\log n} \bigg)^{-1} =\lim_{n\to\infty} \frac{\log n}{\log m_n},\]
and so $\lim_{M\to\infty} \frac{\log n(M)}{\log M} =\frac{s_2}{s_1}$. Finally,
\begin{align*}
\inf_{(2b_M)^{-1}<t\leq 1/2} \frac{\sum_{m=1}^M \norm{b_m t}^2}{\log M} &\geq \inf_{(2a_{n(M)})^{-1}<t\leq 1/2} \frac{\sum_{m=1}^M \norm{b_m t}^2}{\log M} \\
&\geq \inf_{(2a_{n(M)})^{-1}<t\leq 1/2} \frac{\sum_{k=1}^{n(M)-1} \norm{b_{m_k} t}^2}{\log M} \\
&=\inf_{(2a_{n(M)})^{-1}<t\leq 1/2} \frac{\big(\sum_{k=1}^{n(M)} \norm{a_k t}^2 \big) -1}{\log n(M)} \cdot \frac{\log n(M)}{\log M} \\
&\sim \frac{s_2}{s_1} \cdot  \inf_{(2a_{n(M)})^{-1}<t\leq 1/2} \frac{\big(\sum_{k=1}^{n(M)} \norm{a_k t}^2 \big) -1}{\log n(M)} \to\infty
\end{align*}
as $M\to\infty$, and so $\mathbf{b}$ also satisfies Condition C3.
	\end{enumerate}
\end{proof}

The first part of the proposition shows in particular that removing finitely many elements from a Roth-Szekeres sequence still yields a Roth-Szekeres sequence (by similar arguments, this is also true if finitely many elements are added). The second part shows for instance that sequences of the form $a_n = \lfloor n^s \rfloor$ for arbitrary rational numbers $s$ are also Roth-Szekeres sequences, since the sequence of numbers of the form $\lfloor n^{p/q} \rfloor$ is a subsequence of the sequence of all $p$-th powers.

\subsection{Applications of the main theorem to more specific sequences}

	\subsubsection{Polynomial-like sequences}

As an application of \autoref{thm:main}, consider the case when $\mathbf{a} =(a_n)_{n=0}^\infty$ has the asymptotic expansion $a_n=\alpha n^s+\beta n^{s-1}+O(n^{s-2})$ for some real numbers $\alpha,\beta,s$, where $\alpha > 0$ and $s > 0$. Then
		\[a_n^2 =\alpha^2 n^{2s} +2\alpha\beta n^{2s-1} +O(n^{2s-2}) \rsptext{and} a_n^4 =\alpha^4n^{4s} +O\left(n^{4s-1}\right),\]
%=\alpha^2(2s)!\binom{n}{2s} +\alpha(2s-1)!(s(2s-1)\alpha +2\beta) \binom{n}{2s-1} +O\left(n^{2s-2}\right)\]
%		\[ =\alpha^4(4s)!\binom{n}{4s} +O\left(n^{4s-1}\right)\]
so that
\[\sum_{i=1}^n a_i^2 = \frac{\alpha^2}{2s+1} n^{2s+1} +\left( \frac{\alpha^2}{2} + \frac{\alpha\beta}{s} \right) n^{2s} +O\left(n^{2s-1}\right)\]
%\alpha^2(2s)!\binom{n+1}{2s+1} +\alpha(2s-1)!(s(2s-1)\alpha+2\beta) \binom{n+1}{2s} +O\left(n^{2s-1}\right)\]
%		\[=\alpha^2(2s)!\binom{n}{2s+1} +\alpha(2s-1)!(2\alpha s+s(2s-1)\alpha+2\beta)\binom{n}{2s} +O\left(n^{2s-1}\right)\]
%		\[\frac{\alpha^2}{2s+1} n^{2s+1} +\alpha\left(-\alpha s +(s+1/2)\alpha+\beta/s\right)n^{2s} +O\left(n^{2s-1}\right) =\]
and
		\[\sum_{i=1}^n a_i^4 =%\alpha^4 (4s)! \binom{n}{4s+1} +O\left(n^{4s}\right) =
\frac{\alpha^4}{4s+1} n^{4s+1} +O\left(n^{4s}\right).\]
%		\[\rsptext{and so} \frac{1}{\sqrt{\sum_{i=1}^n a_i^2}} =\frac{\sqrt{2s+1}}{\alpha} \left[n^{-s-1/2} -\frac{2s+1}{4} (1+2\beta/s\alpha) n^{-s-3/2}\right] +O\left(n^{-s-5/2}\right). \]
It follows by \autoref{thm:main} that
\[\int_0^{\pi/2} \prod_{i=1}^n \cos(a_i t) \dd{t} =\frac{\sqrt{2\pi(2s+1)}}{2\alpha n^{s+1/2}} \left[1 -\frac{(2s+1)(s(3s+1)\alpha +(4s+1)\beta)}{2s(4s+1)\alpha n} \right] +O\left(n^{-s-5/2+\epsilon}\right) \sptext{for any} \epsilon>0.\]
In fact, in following the proof of \autoref{thm:main} in \autoref{sec:mainProof} for this sequence, we can see that $\epsilon$ may be set to zero.

In the special case that $\mathbf{a}$ is the polynomial sequence $a_n =n^s$, which is an \ES sequence as remarked earlier, we obtain the following result:
		\[S_\mathbf{a}(n) =\left(1+(-1)^{\sum_{i=1}^n i^s}\right) \sqrt{\frac{2s+1}{2\pi}} \frac{2^n}{n^{s+1/2}} \left[1-\frac{(2s+1)(3s+1)}{2(4s+1)n} +O\left(\frac{1}{n^2}\right) \right],\]
which in particular proves the asymptotic formula~\eqref{eq:andrica_ionascu} conjectured by Andrica and Iona\c{s}cu.

 If we only have the weaker property that $a_n \sim \alpha n^s$, it still follows that
\[\int_0^{\pi/2} \prod_{i=1}^n \cos(a_i t) \dd{t}  \sim \frac{\sqrt{2\pi(2s+1)}}{2\alpha n^{s+1/2}}.\]
For example, if $\mathbf{a}$ is the sequence of square-free numbers, for which it is well known that $a_n \sim \pi^2 n/6$ (this is a \RS sequence, e.g. by \autoref{prop:expand} applied to the sequence of primes, which are all square-free),
we get
\[\int_0^{\pi/2} \prod_{i=1}^n \cos(a_i t) \dd{t}  \sim \frac{3\sqrt{6}}{(\pi n)^{3/2}}\]
and a corresponding asymptotic formula for the number of solutions to the signum equation.

	\subsubsection{Polynomials in primes}
		Let us also consider the case when $f$ is a polynomial satisfying the properties mentioned in \autoref{sec:rs_seq}, $f(n) =\alpha n^s +O\left(n^{s-1}\right)$, and $a_n =f(p_n)$ where $p_n$ is the $n$th prime number. We will use the following standard lemma, whose proof is given for completeness:
		\begin{lemma}
			
			If $q(x) =\sum_{i=0}^s c_i x^i$ is a polynomial with $c_s>0$, then $\sum_{i=1}^n q(p_i) \sim\frac{c_s}{s+1} n^{s+1}(\log n)^s$.
			
		\end{lemma}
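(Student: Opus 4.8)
The plan is to show that only the leading term of $q$ contributes to the main asymptotic, and then to evaluate the power sum $\sum_{i=1}^n p_i^s$. Since $q(x) = c_s x^s + O(x^{s-1})$, we have $q(p_i) = c_s p_i^s + O(p_i^{s-1})$, so that $\sum_{i=1}^n q(p_i) = c_s \sum_{i=1}^n p_i^s + O\big(\sum_{i=1}^n p_i^{s-1}\big)$. Thus it suffices to prove $\sum_{i=1}^n p_i^r \sim \frac{1}{r+1} n^{r+1}(\log n)^r$ for real $r>0$ (with the trivial case $\sum_{i=1}^n p_i^0 = n$): the error sum is then of order $n^s(\log n)^{s-1}$, which is $o\big(n^{s+1}(\log n)^s\big)$, and the claim follows with constant $c_s/(s+1)$.

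To establish $\sum_{i=1}^n p_i^s \sim \frac{1}{s+1} n^{s+1}(\log n)^s$, first I would rewrite the sum as a sum over primes, $\sum_{i=1}^n p_i^s = \sum_{p\leq p_n} p^s$, and evaluate $\sum_{p\leq x} p^s$ by Abel (partial) summation against the prime-counting function $\pi$. Writing $\sum_{p\leq x} p^s = x^s\pi(x) - s\int_2^x t^{s-1}\pi(t)\dd{t}$ and inserting the Prime Number Theorem estimate $\pi(t)\sim t/\log t$, the boundary term contributes $\sim x^{s+1}/\log x$ while the integral contributes $\sim \frac{s}{s+1}\, x^{s+1}/\log x$; taking the difference yields $\sum_{p\leq x} p^s \sim \frac{1}{s+1}\cdot\frac{x^{s+1}}{\log x}$. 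Finally I would set $x=p_n$ and use $p_n\sim n\log n$, so that $x^{s+1}\sim n^{s+1}(\log n)^{s+1}$ and $\log x\sim \log n$, giving the desired $\frac{1}{s+1}\, n^{s+1}(\log n)^s$.

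An alternative and perhaps more elementary route avoids $\pi$ altogether: from $p_i\sim i\log i$ one gets $p_i^s = i^s(\log i)^s(1+o(1))$, and a standard squeeze—bounding $p_i/(i\log i)$ between $1-\epsilon$ and $1+\epsilon$ for all large $i$ and comparing the resulting sums with the integral $\int_1^n x^s(\log x)^s\dd{x}\sim \frac{1}{s+1} n^{s+1}(\log n)^s$—gives the same conclusion upon letting $\epsilon\to 0$.

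The main obstacle I anticipate is bookkeeping with error terms rather than a genuine difficulty: one must ensure that the $o(1)$ arising from $p_i\sim i\log i$ (equivalently, the error in $\pi(t)\sim t/\log t$) is controlled uniformly enough that it does not interfere once raised to the power $s$, and that the lower-order monomials of $q$ genuinely contribute at strictly smaller order. Both are routine consequences of the Prime Number Theorem, so no substantial new idea is needed; the lemma is, as noted, entirely standard.
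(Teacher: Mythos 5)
Your proposal is correct, and both arguments ultimately rest on partial summation against the prime-counting function plus the Prime Number Theorem, but the execution differs in ways worth noting. The paper keeps the whole polynomial $q$ intact and writes $\sum_{i\le n} q(p_i)$ as a Stieltjes integral $\int q(t)\dd{\pi(t)}$, then splits $\pi = \li + \epsilon$: the main term comes from $\int_2^{p_n} q(t)\dd{t}/\log t$ via exponential-integral asymptotics, and the error part $\int q(t)\dd{\epsilon(t)}$ is handled by integration by parts. You instead split the polynomial first (reducing to the single monomial $c_s x^s$, which is legitimate since the lower-order sum is $O(n^s(\log n)^{s-1})$) and keep $\pi$ intact, applying Abel summation and inserting $\pi(t)\sim t/\log t$ directly, so your main term arises as the difference $\bigl(1-\tfrac{s}{s+1}\bigr)x^{s+1}/\log x$ of two same-order quantities. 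That differencing step is valid here precisely because the constants $1$ and $\tfrac{s}{s+1}$ do not cancel -- the difference is comparable to each term, so the $o(\cdot)$ errors are absorbed -- but this is the one point you should state explicitly, since subtracting asymptotic equivalences is not legitimate in general. The paper's $\li$-decomposition avoids this issue and would more readily yield secondary terms in the expansion, while your route (especially the alternative squeeze argument from $p_i\sim i\log i$ against $\int_1^n x^s(\log x)^s\dd{x}$, which avoids $\pi$ altogether) is more elementary. Incidentally, you correctly invoke $p_n\sim n\log n$ in the final step; the paper's proof states $p_n\sim n/\log n$ at that point, which is a typo, as only $p_n \sim n\log n$ makes its concluding display correct.
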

		\begin{proof}
			
			We use the ideas described in Section 2.7 of \cite{Bach96}. Let $\pi(x)$ be the prime counting function, $\li(x) =\int_2^x \dd{t}/\log t$ be the logarithmic integral, and define $\epsilon(x) =\pi(x) -\li(x)$ which is $o(t/\log t)$ by the prime number theorem. Then we write the above sum as a Stieltjes integral, where $b<2$:
			\[\sum_{i=1}^n q(p_i) =\sum_{p\leq p_n} q(p) =\int_b^{p_n} q(t) \dd{\pi(t)} =\int_b^{p_n} q(t) \dd{(\li(t) +\epsilon(t))}.\]
			 
			Now we note that $\dd{(\li(t))} =\dd{t}/\log t$ and we perform integration by parts on $\int_b^{p_n} q(t) \dd{\epsilon(t)}$, giving
			\[\sum_{i=1}^n q(p_i) =\int_b^{p_n} \frac{q(t) \dd{t}}{\log t} +[q(t)\epsilon(t)]_b^{p_n} -\int_b^{p_n} \epsilon(t) q^\prime(t)\dd{t}.\]
			Letting $b\to2^-$ and noting that $\epsilon(b)\to0$, this becomes
			\[\sum_{i=1}^n q(p_i) =\int_2^{p_n} \frac{q(t) \dd{t}}{\log t} +q(p_n)\epsilon(p_n) -\int_2^{p_n} \epsilon(t) q^\prime(t) \dd{t} =\sum_{i=0}^s c_i\left[\int_2^{p_n} \frac{t^i \dd{t}}{\log t} +p_n^i \epsilon(p_n) -\int_2^{p_n} \epsilon(t) it^{i-1} \dd{t}\right].\]
			In the $i$th summand, the first integral is an example of an exponential integral and has asymptotic expansion $\frac{p_n^{i+1}}{(i+1)\log p_n}(1+O(1/\log p_n))$, whereas using the asymptotic bound on $\epsilon(t)$ it is easily seen that the other two terms are $o(p_n^{i+1}/\log p_n)$. Hence we have the desired asymptotic expansion, using the asymptotic formula $p_n\sim n/\log n$ that follows from the prime number theorem:
			\[\sum_{i=1}^n q(p_i) \sim\frac{c_s}{s+1} \frac{p_n^{s+1}}{\log p_n} \sim\frac{c_s}{s+1} n^{s+1}(\log n)^s\]
\end{proof}
Applying this lemma, we get that
		\[\sum_{i=1}^n a_i^2 =\sum_{i=1}^n f(p_n)^2 \sim\frac{\alpha^2}{2s+1} n^{2s+1}(\log n)^{2s}.\]
		Thus \autoref{thm:main} gives us
		\[\int_0^{\pi/2} \prod_{i=1}^n \cos(a_i t) \dd{t} \sim\frac{1}{2}\sqrt{\frac{2\pi}{\frac{\alpha^2}{2s+1} n^{2s+1} (\log n)^{2s}}} =\frac{\sqrt{2\pi(2s+1)}}{2\alpha} \frac{1}{n^{s+1/2} (\log n)^s},\]
and so
\[S_\mathbf{a}(n) \sim\frac{\sqrt{2s+1}}{\sqrt{2\pi}\alpha} \frac{2^{n+1}}{n^{s+1/2}(\log n)^s} \lsptext{as} n\to\infty \sptext{and} \sum_{i=1}^n f(p_n) \sptext{is even.}\]
Specifically, if $\mathbf{a}$ is the sequence of primes,
\[S_\mathbf{a}(n) \sim \sqrt{\frac{6}{\pi}} \frac{2^n}{n^{3/2} \log n} \lsptext{as} n\to\infty \sptext{for odd} n.\]

\section{Proof of Main Theorem} \label{sec:mainProof}

	To prove \autoref{thm:main}, we will require the following lemmas; in both of them, we assume that the sequence $\mathbf{a}$ satisfies Condition C2. 

	\begin{lemma} \label{lem:s24}

		If $b>0$, then $\sum_{i=1}^n a_i^b =O\left(n^{bs+1+\epsilon} \right)$ and $\left(\sum_{i=1}^n a_i^b\right)^{-1} =O\left(n^{-bs-1+\epsilon}\right)$ for any $\epsilon>0$. As corollaries, we have the following:

		If $b>d>0$ and $c\in\R$, then \begin{equation} \frac{\sqrt[d]{\sum_{i=1}^n a_i^d}}{(\log n)^c\sqrt[b]{\sum_{i=1}^n a_i^b}} \to\infty \text{ as } n\to\infty,\lrsptext{and} \frac{\sqrt[d]{\sum_{i=1}^n a_i^d}}{(\log n)^c a_n} \to\infty \text{ as } n\to\infty. \end{equation}

	\end{lemma}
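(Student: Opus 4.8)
The plan is to convert Condition~C2 into crude power bounds on the individual terms $a_n$, sum these using the elementary asymptotics of $\sum_{i=1}^n i^\alpha$, and then take $d$-th and $b$-th roots to deduce the two corollaries.

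First I would fix $\delta>0$ and use Condition~C2 to record that, since $\log a_n/\log n\to s$, there is an $N=N(\delta)$ with
\[ n^{s-\delta} < a_n < n^{s+\delta} \qquad\text{for all } n\ge N. \]
(The upper bound is $\log a_n<(s+\delta)\log n$ and the lower bound is $\log a_n>(s-\delta)\log n$; here the hypothesis $s>0$ is what lets me take $\delta$ small enough that both exponents stay positive.) For the first estimate I split the sum as $\sum_{i=1}^N a_i^b+\sum_{i=N+1}^n a_i^b$, treat the first part as a constant, and bound the tail termwise by $i^{b(s+\delta)}$. Because $b(s+\delta)>-1$, the standard estimate $\sum_{i=1}^n i^\alpha=O(n^{\alpha+1})$ (for $\alpha>-1$) gives $\sum_{i=1}^n a_i^b=O(n^{b(s+\delta)+1})$, and setting $\delta=\epsilon/b$ turns this into $O(n^{bs+1+\epsilon})$. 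For the reciprocal I discard the first $N$ terms and bound the tail from below, using $a_i^b>i^{b(s-\delta)}$ together with $\sum_{i=N+1}^n i^{b(s-\delta)}\ge c_1 n^{b(s-\delta)+1}$ for some $c_1>0$ (legitimate since $b(s-\delta)>-1$ for small $\delta$); hence $\big(\sum_{i=1}^n a_i^b\big)^{-1}=O(n^{-b(s-\delta)-1})=O(n^{-bs-1+\epsilon})$ after again choosing $\delta=\epsilon/b$.

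The two corollaries then come from pairing a lower bound on the numerator with an upper bound on the denominator, both read off from the same power bounds on $a_n$. The tail estimate gives $\sum_{i=1}^n a_i^d\ge c_2 n^{d(s-\delta)+1}$, so $\big(\sum_{i=1}^n a_i^d\big)^{1/d}\ge c_2^{1/d} n^{\,s-\delta+1/d}$; likewise $\big(\sum_{i=1}^n a_i^b\big)^{1/b}=O(n^{\,s+\delta+1/b})$ and $a_n=O(n^{\,s+\delta})$. Dividing, the first ratio is bounded below by a constant times $n^{\,1/d-1/b-2\delta}/(\log n)^c$ and the second by a constant times $n^{\,1/d-2\delta}/(\log n)^c$. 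Since $b>d>0$ forces $1/d-1/b>0$, and $1/d>0$ in any case, I fix $\delta$ small enough that each exponent of $n$ is strictly positive; a positive power of $n$ beats any fixed power of $\log n$, so both expressions tend to $\infty$.

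The argument has no serious obstacle; the only point requiring care is the $\epsilon$--$\delta$ bookkeeping. One must keep the slack $\delta$ supplied by C2 separate from the target exponent shift $\epsilon$, couple them only at the very end (through $\delta=\epsilon/b$ for the two main estimates, and merely ``$\delta$ small'' for the corollaries), and verify at each use that $s>0$ keeps the exponents $b(s\pm\delta)$ above $-1$ so that the summation asymptotic remains applicable.
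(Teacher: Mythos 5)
Your proof is correct and follows exactly the route the paper takes: the paper's own proof is the one-line observation that Condition C2 gives $i^{s-\delta} < a_i < i^{s+\delta}$ for sufficiently large $i$, from which everything "follows immediately." Your write-up simply fills in the summation and $\epsilon$--$\delta$ bookkeeping that the paper leaves to the reader, and does so correctly.
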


\begin{proof}
This follows immediately from the fact that for any $\delta > 0$, $i^{s-\delta} < a_i < i^{s+\delta}$ for sufficiently large $i$ according to Condition C2.
\end{proof}

	\begin{lemma} \label{lem:intexp}

		If $b_n>0$ and $b_n^2 \sum_{i=1}^n a_i^2 /\log n \to\infty$ as $n\to\infty$, then for any $m\geq0$ and any fixed $\ell > 0$, we have
		\[\int_0^{b_n} \pi^mt^m \exp\left(-\frac{\pi^2t^2}{2} \sum_{i=1}^n a_i^2\right) \dd{t} =2^{(m-1)/2} \pi^{-1} \Gamma\left(\frac{m+1}{2}\right) \left(\sum_{i=1}^n a_i^2\right)^{-(m+1)/2} +O\left(n^{-\ell}\right).\]

	\end{lemma}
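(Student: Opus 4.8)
The plan is to recognize that the integrand is, up to the constant factor $\pi^m$, a Gaussian moment, so that the stated main term is exactly the value of the integral taken over the whole half-line $[0,\infty)$; the error term then comes entirely from discarding the tail on $[b_n,\infty)$. Write $\sigma_n =\sum_{i=1}^n a_i^2$ and $c =\pi^2\sigma_n/2$, so the quantity to estimate is $\pi^m\int_0^{b_n} t^m e^{-c t^2}\dd{t}$.

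First I would evaluate the complete integral. Using the standard moment formula $\int_0^\infty t^m e^{-c t^2}\dd{t} =\tfrac{1}{2} c^{-(m+1)/2}\Gamma\!\big(\tfrac{m+1}{2}\big)$ (obtained from the substitution $v =c t^2$) and inserting $c =\pi^2\sigma_n/2$, a short simplification of powers of $2$ and $\pi$ gives
\[\pi^m\int_0^\infty t^m e^{-c t^2}\dd{t} =2^{(m-1)/2}\pi^{-1}\Gamma\!\Big(\tfrac{m+1}{2}\Big)\,\sigma_n^{-(m+1)/2},\]
which is precisely the claimed main term. It therefore remains only to show that the tail $\pi^m\int_{b_n}^\infty t^m e^{-c t^2}\dd{t}$ is $O(n^{-\ell})$, since the integral over $[0,b_n]$ is the difference of the two.

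For the tail I would use the elementary inequality that for every $t\geq b_n$ one has $c t^2 \geq \tfrac{1}{2}c b_n^2 +\tfrac{1}{2}c t^2$ (equivalent to $t\geq b_n$), so that $e^{-c t^2}\leq e^{-c b_n^2/2}e^{-c t^2/2}$. Pulling out the boundary factor and extending the remaining integral back to $0$ yields
\[\pi^m\int_{b_n}^\infty t^m e^{-c t^2}\dd{t} \leq e^{-c b_n^2/2}\,\pi^m\int_0^\infty t^m e^{-c t^2/2}\dd{t} =e^{-c b_n^2/2}\cdot 2^{(m-1)/2}\pi^{-1}\Gamma\!\Big(\tfrac{m+1}{2}\Big)\,(\sigma_n/2)^{-(m+1)/2}.\]
The convergent prefactor is $O(\sigma_n^{-(m+1)/2})$, hence bounded (indeed polynomially bounded in $n$ via \autoref{lem:s24}). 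The decisive factor is $e^{-c b_n^2/2} =\exp\!\big(-\tfrac{\pi^2}{4}b_n^2\sigma_n\big)$: the hypothesis $b_n^2\sigma_n/\log n\to\infty$ says exactly that $b_n^2\sigma_n\geq K\log n$ eventually for every constant $K$, so this exponential is $O(n^{-\ell})$ for any fixed $\ell$, and its super-polynomial decay absorbs the polynomially bounded prefactor. Combining the exact main term with this negligible tail gives the lemma.

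The main point to get right is the tail estimate, though it is mild: the only genuinely delicate steps are choosing the clean splitting inequality $c t^2 \geq \tfrac12 c b_n^2 +\tfrac12 c t^2$ so that the remaining integral stays convergent and independent of $b_n$, and then translating the divergence hypothesis into super-polynomial decay of the boundary exponential. Everything else is a routine computation of Gaussian moments.
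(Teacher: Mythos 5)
Your proposal is correct and follows essentially the same route as the paper: write the integral as the complete Gaussian moment over $[0,\infty)$ minus the tail over $[b_n,\infty)$, evaluate the former exactly via the substitution $v=ct^2$, and bound the tail by sacrificing half the exponent so that the hypothesis $b_n^2\sum_{i=1}^n a_i^2/\log n\to\infty$ forces super-polynomial decay. The only (cosmetic) difference is that you split the exponent before substituting, while the paper substitutes first and then uses $u^{(m-1)/2}\leq e^{u/2}$ on the incomplete gamma integral; both are the same trick.
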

	\begin{proof}

		\[\int_0^{b_n} t^m \exp\left(-\frac{\pi^2t^2}{2} \sum_{i=1}^n a_i^2\right) \dd{t} =\int_0^\infty t^m \exp\left(-\frac{\pi^2t^2}{2} \sum_{i=1}^n a_i^2\right) \dd{t} -\int_{b_n}^\infty t^m \exp\left(-\frac{\pi^2t^2}{2} \sum_{i=1}^n a_i^2\right) \dd{t}.\]
		The first integral is (substituting $u= (\pi^2t^2/2) \sum_{i=1}^n a_i^2$)
\begin{align*} \int_0^\infty t^m \exp\left(-\frac{\pi^2t^2}{2} \sum_{i=1}^n a_i^2\right) \dd{t} &=
2^{(m-1)/2} \left(\pi^2 \sum_{i=1}^n a_i^2\right)^{-(m+1)/2} \int_0^\infty u^{(m-1)/2} e^{-u} \dd{u} \\
&=2^{(m-1)/2} \pi^{-m-1} \Gamma\left(\frac{m+1}{2}\right) \left(\sum_{i=1}^n a_i^2\right)^{-(m+1)/2},
\end{align*}
		and by a similar procedure the second integral can be written as follows:
		\[\int_{b_n}^\infty t^m \exp\left(-\frac{\pi^2t^2}{2} \sum_{i=1}^n a_i^2\right) \dd{t} =2^{(m-1)/2} \pi^{-m-1} \left(\sum_{i=1}^n a_i^2\right)^{-(m+1)/2} \int_{x_n}^\infty u^{(m-1)/2} e^{-u} \dd{u},\]
		where $x_n =\pi^2b_n^2 \sum_{i=1}^n a_i^2/2 \to\infty$ as $n\to\infty$. Now for $u$ sufficiently large, $u^{(m-1)/2} \leq e^{u/2}$, so that for $n$ sufficiently large,
\[0 \leq \int_{x_n}^\infty u^{(m-1)/2} e^{-u} \dd{u} \leq \int_{x_n}^\infty e^{-u/2} \dd{u} =2 e^{-x_n/2} =O\left(n^{-\ell}\right).\]
Here, the last estimate follows from the assumption made on $b_n$, which implies that $x_n/\log n \to \infty$ as $n \to \infty$. This completes the proof.
\end{proof}

	\bigskip
	Now we are ready to prove \autoref{thm:main}:

	\begin{proof}[Proof of \autoref*{thm:main}]

		We assume throughout that $n$ is large, and thus that $a_n$ is large positive and not less than $a_i$ for $i<n$. We rewrite the integral in \eqref{eq:int} as follows: \begin{equation} \int_0^{\pi/2}\prod_{i=1}^n \cos(a_i t)\dd{t} =\pi\int_0^{1/(2a_n)} \prod_{i=1}^n \cos(a_i\pi t)\dd{t} +\pi\int_{1/(2a_n)}^{1/2} \prod_{i=1}^n \cos(a_i\pi t)\dd{t} =I_1+I_2. \end{equation}

		The second integral, $I_2$, can be estimated as follows, making use of the simple inequality $|\cos(\pi x)|\leq \exp(-\pi^2\|x\|^2/2)$ that is valid for all real $x$:

\begin{align*}
\left|\int_{1/(2a_n)}^{1/2} \prod_{i=1}^n \cos(a_i\pi t)\dd{t}\right| &\leq\int_{1/(2a_n)}^{1/2} \prod_{i=1}^n |\cos(a_i\pi t)| \dd{t} \leq\int_{1/(2a_n)}^{1/2} \prod_{i=1}^n \exp\left(-\frac{\pi^2}{2} \|a_i t\|^2\right) \dd{t} \\
&\leq\int_{1/(2a_n)}^{1/2} \exp\left(-\frac{\pi^2}{2} \sum_{i=1}^n \|a_i t\|^2\right) \dd{t}
\leq\int_{1/(2a_n)}^{1/2} \exp\left(-\frac{\pi^2}{2}J_n\log n \right) \dd{t} \\ &=\left[\frac{1}{2}-\frac{1}{2a_n}\right] \exp\left(-\frac{\pi^2}{2}J_n\log n\right) <\frac{1}{2} n^{-\pi^2 J_n/2}.
\end{align*}
Since $J_n\to\infty$ as $n\to\infty$ by condition C3, it follows that

\begin{equation} \label{eq:firstEstimate} I_2 =\pi\int_{1/2a_n}^{1/2}\prod_{i=1}^n \cos(a_i\pi t)\dd{t} =O(n^{-\ell}) \lsptext{for any} \ell>0. \end{equation}

		We now split up the first integral, $I_1$, again:
		\[\pi\int_0^{1/(2a_n)} \prod_{i=1}^n \cos(a_i\pi t)\dd{t} =\pi\int_0^{b_n} \prod_{i=1}^n \cos(a_i\pi t)\dd{t} +\pi\int_{b_n}^{1/(2a_n)} \prod_{i=1}^n \cos(a_i\pi t)\dd{t} =I_3+I_4\] where $b_n\in(0,1/(2a_n))$ will be chosen later. $I_4$ can be estimated as before (note that $\|a_i t\| = a_i t$ for $0 \leq t \leq 1/(2a_n) \leq 1/(2a_i)$):
\begin{align*}
\left|\int_{b_n}^{1/2a_n} \prod_{i=1}^n \cos(a_i\pi t) \dd{t} \right| &\leq\int_{b_n}^{1/2a_n} \exp\left(-\frac{\pi^2t^2}{2} \sum_{i=1}^n a_i^2\right)\dd{t} <\int_{b_n}^{1/2a_n} \exp\left(-\frac{\pi^2b_n^2}{2} \sum_{i=1}^n a_i^2\right)\dd{t} \\
&<\int_{0}^{1/2a_n} \exp\left(-\frac{\pi^2b_n^2}{2} \sum_{i=1}^n a_i^2\right)\dd{t} =\frac{1}{2a_n} \exp\left(-\frac{\pi^2b_n^2}{2} \sum_{i=1}^n a_i^2\right).
\end{align*}
		We then have the following estimate: \begin{equation} \label{eq:secondEstimate} I_4 = O(n^{-\ell}) \text{ for any } \ell>0, \quad\text{provided that } b_n^2 \sum_{i=1}^n a_i^2 /\log n\to\infty \text{ as } n\to\infty. \end{equation}
Now we use the Taylor expansion
$$\log \cos x = - \frac{x^2}{2} - \frac{x^4}{12} + O(x^6),$$
which gives
\begin{align}
\prod_{i=1}^n \cos(a_i\pi t) &=\exp\left(-\frac{\pi^2t^2}{2}\sum_{i=1}^n a_i^2 -\frac{\pi^4t^4}{12} \sum_{i=1}^n a_i^4 + O \left( t^6 \sum_{i=1}^n a_i^6 \right) \right) \nonumber \\
&= \exp\left(-\frac{\pi^2t^2}{2}\sum_{i=1}^n a_i^2\right) \left( 1 -\frac{\pi^4t^4}{12} \sum_{i=1}^n a_i^4 + O\left( t^6 \sum_{i=1}^n a_i^6 + t^8 \left(\sum_{i=1}^n a_i^4\right)^2\right) \right) \label{eq:thirdEstimate}
\end{align}
for $|t| \leq b_n$, provided that $b_n^4 \sum_{i=1}^n a_i^4 \to 0$ and $b_n^6 \sum_{i=1}^n a_i^6 \to 0$ as $n\to\infty$ (in fact, the former implies the latter).
Thus by \autoref{lem:intexp},
\begin{align*}
\int_0^{b_n} &\prod_{i=1}^n \cos(a_i\pi t) \dd{t} \\
&=\int_0^{b_n} \exp\left(-\frac{\pi^2t^2}{2} \sum_{i=1}^n a_i^2\right) \dd{t} - \sum_{i=1}^n a_i^4 \int_0^{b_n} \frac{\pi^4t^4}{12} \exp\left(-\frac{-\pi^2t^2}{2} \sum_{i=1}^n a_i^2\right) \dd{t} \\
&\qquad+O\left(\sum_{i=1}^n a_i^6 \int_0^{b_n} t^6 \exp\left(-\frac{\pi^2t^2}{2} \sum_{i=1}^n a_i^2\right) \dd{t} + \left(\sum_{i=1}^n a_i^4\right)^2 \int_0^{b_n} t^8 \exp\left(-\frac{\pi^2t^2}{2} \sum_{i=1}^n a_i^2\right) \dd{t}\right) \\
&=\frac{1}{\sqrt{2}\pi} \Gamma(1/2) \left(\sum_{i=1}^n a_i^2\right)^{-1/2} -\frac{2^{3/2}}{12\pi} \Gamma(5/2) \sum_{i=1}^n a_i^4 \left(\sum_{i=1}^n a_i^2\right)^{-5/2} \\
&\qquad+O\left(\sum_{i=1}^n a_i^6 \left(\sum_{i=1}^n a_i^2\right)^{-7/2} + \left(\sum_{i=1}^n a_i^4\right)^2 \left(\sum_{i=1}^n a_i^2\right)^{-9/2}\right) +O\left(n^{-\ell}\right)
\end{align*}
		for any $\ell>0$. Now by \autoref{lem:s24}, we have
\[\sum_{i=1}^n a_i^6 \left(\sum_{i=1}^n a_i^2\right)^{-7/2} =O\left(n^{-s-5/2+\epsilon}\right) \lrsptext{and} \left(\sum_{i=1}^n a_i^4\right)^2 \left(\sum_{i=1}^n a_i^2\right)^{-9/2} =O\left(n^{-s-5/2+\epsilon}\right)\]
for any $\epsilon>0$. 
%Hence
%		\[\int_0^{b_n} \exp\left(-\frac{\pi^2t^2}{2} \sum_{i=1}^n a_i^2 -\frac{\pi^4t^4}{12} \sum_{i=1}^n a_i^4\right) \dd{t} =\frac{1}{\sqrt{2\pi}} \left(\sum_{i=1}^n a_i^2\right)^{\mspace{-6mu} -1/2} \mspace{-6mu}-\frac{1}{4\sqrt{2\pi}} \sum_{i=1}^n a_i^4 \left(\sum_{i=1}^n a_i^2\right)^{\mspace{-6mu}-5/2} +O\left(n^{-s-5/2+\epsilon}\right)\]
%		for any $\epsilon>0$. 
%This combined with \eqref{eq:thirdEstimate} shows that
Thus it follows that
		\begin{equation} \label{eq:fifthEstimate} I_3 =\frac{1}{2} \sqrt{\frac{2\pi}{\sum_{i=1}^n a_i^2}} -\frac{\sqrt{2\pi}}{8} \frac{\sum_{i=1}^n a_i^4}{\left(\sum_{i=1}^n a_i^2\right)^{5/2}} +O\left(n^{-s-5/2+\epsilon}\right) \lsptext{for any} \epsilon>0. \end{equation}
Finally, combining \eqref{eq:firstEstimate}, \eqref{eq:secondEstimate} and \eqref{eq:fifthEstimate} we arrive at the following second-order approximation for our initial integral:
		\begin{equation} \label{eq:finalResult} \int_{-\pi/2}^{\pi/2} \prod_{i=1}^n \cos(a_i t)\dd{t} =I_3+I_4+I_2 =\frac{1}{2} \sqrt{\frac{2\pi}{\sum_{i=1}^n a_i^2}} -\frac{\sqrt{2\pi}\sum_{i=1}^n a_i^4}{8\left(\sum_{i=1}^n a_i^2\right)^{5/2}}+O(n^{-s-5/2+\epsilon}) \lsptext{for any} \epsilon>0.\end{equation}

		The only issue which yet remains is the existence of a sequence $(b_n)_{n=1}^\infty$ satisfying the conditions imposed on it in \autoref{lem:intexp}, \eqref{eq:secondEstimate} and \eqref{eq:thirdEstimate}. Using \autoref{lem:s24}, it is easy to see that $b_n =n^{-s-1/3}$ satisfies these conditions, and hence our proof is complete.
\end{proof}

\bibliographystyle{abbrv}
\bibliography{bib}

\end{document}